\theoremstyle{plain}
\newtheorem{theorem}{Theorem}[section]
\newtheorem{lemma}[theorem]{Lemma}
\newtheorem{proposition}[theorem]{Proposition}
\theoremstyle{definition}
\numberwithin{equation}{section}
\begin{document}

\title{Characterization of nuclearity for Beurling-Bj\"{o}rck spaces}
\author[A. Debrouwere]{Andreas Debrouwere}
\thanks{A. Debrouwere was supported by  FWO-Vlaanderen through the postdoctoral grant 12T0519N}

\author[L. Neyt]{Lenny Neyt}
\thanks{L. Neyt gratefully acknowledges support by Ghent University through the BOF-grant 01J11615.}

\author[J. Vindas]{Jasson Vindas}
\thanks {J. Vindas was supported by Ghent University through the BOF-grants 01J11615 and 01J04017.}

\address{Department of Mathematics: Analysis, Logic and Discrete Mathematics\\ Ghent University\\ Krijgslaan 281\\ 9000 Gent\\ Belgium}
\email{andreas.debrouwere@UGent.be}
\email{lenny.neyt@UGent.be}
\email{jasson.vindas@UGent.be}

\subjclass[2010]{\emph{Primary.}  46E10, 46F05. \emph{Secondary.}  42B10, 46A11, 81S30.}
\keywords{Beurling-Bj\"{o}rck spaces; nuclear spaces; ultradifferentiable functions; the short-time Fourier transform; time-frequency analysis methods in functional analysis}

\begin{abstract}
We characterize the nuclearity of the Beurling-Bj\"{o}rck spaces $\mathcal{S}^{(\omega)}_{(\eta)}(\mathbb{R}^d)$ and $\mathcal{S}^{\{\omega\}}_{\{\eta\}}(\mathbb{R}^d)$ 
in terms of the defining weight functions $\omega$ and $\eta$. 
\end{abstract}
\maketitle

\section{Introduction}

In recent works Boiti et al. \cite{B-J-O2019,B-J-O,B-J-O-S} have investigated the nuclearity of the Beurling-Bj\"{o}rck space $\mathcal{S}_{(\omega)}^{(\omega)}(\mathbb{R}^{d})$ (in our notation below). Their most general result \cite[Theorem 3.3]{B-J-O-S} establishes the nuclearity of this Fr\'{e}chet space when $\omega$ is a Braun-Meise-Taylor type weight function \cite{braun-meise-taylor} (where non-quasianalyticity is relaxed to $\omega(t)=o(t)$ and the condition $\log(t)=o(\omega(t))$ from \cite{braun-meise-taylor} is weakened to  $\log t=O(\omega(t))$). 

The aim of this note is to improve and generalize \cite[Theorem 3.3]{B-J-O-S} by considerably weakening the set of hypotheses on the weight functions, providing a complete characterization of the nuclearity of these spaces (for radially increasing weight functions),  and considering anisotropic spaces and the Roumieu case as well. Particularly, we shall show that the conditions $(\beta)$ and $(\delta)$ from \cite[Definition 2.1]{B-J-O-S} play no role in deducing nuclearity. 

Let us introduce some concepts in order to state our main result. A weight function on $\mathbb{R}^{d}$ is simply a non-negative, measurable, and  locally bounded function. We consider the following standard conditions \cite{bjorck66,braun-meise-taylor}:
\begin{itemize}
\item[$(\alpha)\:$] There are $L,C>0$ such that $\omega(x + y) \leq L(\omega(x) + \omega(y)) +\log C,$ for all $x,y \in \mathbb{R}^d.$
\item [$(\gamma)\:$] There are $A,B>0$ such that $A\log (1+|x|)\leq \omega(x)+ \log B,$ for all $x\in\mathbb{R}^{d}$.
\item  [$\{\gamma\}$] $\displaystyle \lim_{|x|\to\infty}\frac{\omega(x)}{\log |x|}=\infty. $
\end{itemize}
A weight function $\omega$ is called radially increasing if $\omega(x) \leq \omega(y)$ whenever $|x| \leq |y|$.
Given a weight function $\omega$ and a parameter $\lambda>0$, we introduce the family of norms
%\begin{equation}
%\label{seminorms}
$$
\|\varphi\|_{\omega,\lambda} = \sup_{x\in\mathbb{R}^{d}} |\varphi(x)|e^{\lambda \omega(x)}.
$$
%\end{equation}
If $\eta$ is another weight function, we consider the Banach space $\mathcal{S}^{\lambda}_{\eta,\omega}(\mathbb{R}^{d})$ consisting of all $\varphi \in \mathcal{S}'(\mathbb{R}^{d})$ such that $\|\varphi\|_{\mathcal{S}^{\lambda}_{\eta,\omega}}:= \|\varphi\|_{\eta,\lambda}+\|\widehat{\varphi}\|_{\omega,\lambda}<\infty,$ where $\widehat{\varphi}$ stands for the Fourier transform of $\varphi$. Finally, we define the Beurling-Bj\"{o}rck spaces (of Beurling and Roumieu type) as 
\[
\mathcal{S}_{(\eta)}^{(\omega)}(\mathbb{R}^{d})=\varprojlim_{\lambda \to\infty} 
\mathcal{S}^{\lambda}_{\eta,\omega}(\mathbb{R}^{d}) \quad \mbox{ and }\quad \mathcal{S}_{\{\eta\}}^{\{\omega\}}(\mathbb{R}^{d})=\varinjlim_{\lambda\to0^{+}} \mathcal{S}^{\lambda}_{\eta,\omega}(\mathbb{R}^{d}).
\]
\begin{theorem}\label{nuclearity theorem Beurling-Bjorck}
Let $\omega$ and $\eta$ be weight functions satisfying $(\alpha)$. 
\begin{itemize}
\item[$(a)$] If $\omega$ and $\eta$  satisfy $(\gamma)$, then $\mathcal{S}_{(\eta)}^{(\omega)}(\mathbb{R}^{d})$ is nuclear. Conversely, if in addition $\omega$ and $\eta$
are radially increasing, then the nuclearity of $\mathcal{S}_{(\eta)}^{(\omega)}(\mathbb{R}^{d})$ implies that  $\omega$ and $\eta$  satisfy $(\gamma)$ (provided that $\mathcal{S}_{(\eta)}^{(\omega)}(\mathbb{R}^{d}) \neq \{0\}$).
\item[$(b)$] If $\omega$ and $\eta$  satisfy $\{\gamma\}$, then $\mathcal{S}_{\{\eta\}}^{\{\omega\}}(\mathbb{R}^{d})$ is nuclear. Conversely, if in addition $\omega$ and $\eta$
are radially increasing, then the nuclearity of  $\mathcal{S}_{\{\eta\}}^{\{\omega\}}(\mathbb{R}^{d})$ implies that  $\omega$ and $\eta$  satisfy $\{\gamma\}$ (provided that $\mathcal{S}_{\{\eta\}}^{\{\omega\}}(\mathbb{R}^{d}) \neq \{0\}$).
\end{itemize}
\end{theorem}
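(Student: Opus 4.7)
The strategy is to reduce the nuclearity question to the Grothendieck--Pietsch criterion applied to weighted K\"othe sequence spaces, using time--frequency analysis (the short--time Fourier transform and Gabor frames) as the bridge. For a suitable window $g$ in the space, define $V_g\varphi(x,\xi)=\int_{\mathbb{R}^d} \varphi(t)\overline{g(t-x)}e^{-2\pi i t\cdot \xi}\,dt$. Since the defining norm of $\mathcal{S}^\lambda_{\eta,\omega}$ controls the decay of $\varphi$ at rate $\eta$ and of $\widehat\varphi$ at rate $\omega$, one expects a two--sided estimate
\[
\|\varphi\|_{\mathcal{S}^{\lambda}_{\eta,\omega}}\asymp \sup_{(x,\xi)\in\mathbb{R}^{2d}}|V_g\varphi(x,\xi)|\,e^{\lambda'(\eta(x)+\omega(\xi))},
\]
valid after a controlled adjustment of $\lambda$ into $\lambda'$. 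Condition $(\alpha)$ is precisely the ingredient that permits such a shift: it lets one absorb translates of the window into comparable exponential weights on both sides via the fundamental identity $V_g\varphi(x,\xi)=e^{-2\pi i x\cdot\xi}V_{\hat g}\widehat{\varphi}(\xi,-x)$. I would devote the technical heart of the argument to proving this equivalence for both weight regimes, and then use a Gabor frame on a sufficiently dense lattice $\alpha\mathbb{Z}^d\times\beta\mathbb{Z}^d$ together with the dual--window reconstruction formula to realize $\mathcal{S}^{(\omega)}_{(\eta)}(\mathbb{R}^d)$ (respectively $\mathcal{S}^{\{\omega\}}_{\{\eta\}}(\mathbb{R}^d)$) as a \emph{complemented} subspace of the sup-type K\"othe space with weights $e^{\lambda(\eta(\alpha n)+\omega(\beta k))}$, $(n,k)\in \mathbb{Z}^{2d}$.

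Once this identification is in place, nuclearity of the Beurling--Bj\"orck space is equivalent to nuclearity of the corresponding sequence space, which by the classical Grothendieck--Pietsch criterion reads: in case $(a)$, for every $\lambda>0$ there is $\mu>\lambda$ with
\[
\sum_{(n,k)\in\mathbb{Z}^{2d}} e^{-(\mu-\lambda)(\eta(\alpha n)+\omega(\beta k))}<\infty,
\]
and in case $(b)$ the analogous sum must be finite for \emph{every} $\delta>0$ in place of $\mu-\lambda$. Condition $(\gamma)$ gives $e^{-\delta\omega(x)}\lesssim (1+|x|)^{-A\delta}$, summable on $\mathbb{Z}^d$ for $\delta$ large; since in the Fr\'echet case $\mu-\lambda$ is at our disposal, this handles $(a)$. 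Condition $(\gamma_0)$ gives arbitrary polynomial decay of $e^{-\delta\omega(x)}$ for every $\delta>0$, handling $(b)$.

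For the converse, assume the space is nuclear and $\omega,\eta$ are radially increasing; nontriviality provides a window for the STFT isomorphism and transfers nuclearity to the sequence space, so the Grothendieck--Pietsch sums above are finite. Radial monotonicity forces $\omega$ and $\eta$ to be essentially functions of $|\cdot|$ alone, whence
\[
\sum_{|k|\le R} e^{-\delta\omega(\beta k)} \gtrsim R^d e^{-\delta\omega(\beta R)};
\]
thus finiteness of $\sum_{k\in\mathbb{Z}^d} e^{-\delta\omega(\beta k)}$ for some $\delta>0$ implies $\omega(R)\gtrsim \log R$, recovering $(\gamma)$, while finiteness for \emph{every} $\delta>0$ yields $\omega(R)/\log R\to \infty$, which is $(\gamma_0)$. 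The same argument applies to $\eta$ after swapping the roles of the time and frequency variables in the STFT.

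The principal obstacle is the STFT--to--sequence--space identification under the minimal hypotheses of the theorem: one must verify that a Gabor frame with a window drawn from the Beurling--Bj\"orck space itself yields a topological isomorphism onto a complemented image in the K\"othe space, with only controlled loss in the parameter $\lambda$, using only $(\alpha)$ together with $(\gamma)$ or $(\gamma_0)$. Previous works in this direction lean on the stronger conditions $(\beta)$ and $(\delta)$ of \cite{B-J-O-S} to simplify this step, whereas the present theorem requires pushing the construction through without them; in particular, one must construct a nontrivial window element under only the stated assumptions in order to render the necessity argument non-vacuous.
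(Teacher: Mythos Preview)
Your sufficiency argument is essentially the paper's, up to cosmetic differences: the paper uses the \emph{continuous} STFT and the reconstruction formula \eqref{eq:reconstructSTFT} to embed $\mathcal{S}^{\omega}_{\eta}(\mathbb{R}^{d})$ as a complemented subspace of the smooth space $\mathcal{S}_{\eta\oplus\omega}(\mathbb{R}^{2d})$, whose nuclearity is then obtained from a Gelfand--Shilov type result, whereas you discretize via a Gabor frame and land directly in a K\"othe space. Both routes work; the paper's avoids the dual--window regularity issue and the Grothendieck--Pietsch computation by delegating them to the nuclearity of $\mathcal{S}_{\eta\oplus\omega}$.

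There is, however, a genuine gap in your converse. You assert that once $\mathcal{S}^{\omega}_{\eta}(\mathbb{R}^{d})$ is realized as a \emph{complemented subspace} of the K\"othe space, its nuclearity becomes \emph{equivalent} to that of the ambient K\"othe space, so that you may apply Grothendieck--Pietsch. But nuclearity passes to subspaces, not to superspaces: knowing that a complemented subspace is nuclear does not make the full $\lambda^{1}(A)$ nuclear, and it is the nuclearity of the \emph{full} K\"othe space that is characterized by the summability condition you want. The paper closes this gap with a different device (Proposition~\ref{P-trick}, due to Petzsche): one only needs continuous maps $T:\lambda^{1}(A_\eta)\to \mathcal{S}^{\omega}_{\eta}$ and $S:\mathcal{S}^{\omega}_{\eta}\to\lambda^{\infty}(A_\eta)$ with $S\circ T=\iota$, and then nuclearity of the middle space forces $\lambda^{1}(A_\eta)$ to be nuclear. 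Crucially, the paper does \emph{not} build $T$ and $S$ from a Gabor frame; it takes $T$ to be the synthesis $\sum_j c_j\varphi_0(\cdot-j)$ with a specially constructed $\varphi_0\in\mathcal{S}^{\omega}_{\eta}$ satisfying $\int_{[0,1/2]^d}\varphi_0(j+x)\,dx=\delta_{j,0}$, and $S$ to be local averaging over shifted cubes. These maps are continuous using only $(\alpha)$, so no circular appeal to $(\gamma)$ is needed, and the sequence space lives over $\mathbb{Z}^d$ (not $\mathbb{Z}^{2d}$), so its nuclearity translates directly into the summability condition on $\eta$ alone. Your Gabor scheme would both require you to justify continuity of the synthesis map without $(\gamma)$ and to supply an analogue of Petzsche's lemma; as written, the necessity step is incomplete.
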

Furthermore, we discuss the equivalence of the various definitions of  Beurling-Bj\"orck type spaces given in the literature \cite{C-C-K, grochenig-zimmermann,B-J-O-S} but considered here under milder assumptions. In particular, we show that, if $\omega$ satisfies $(\alpha)$ and $(\gamma)$, our definition of $\mathcal{S}_{(\omega)}^{(\omega)}(\mathbb{R}^{d})$ coincides with the one employed in \cite{B-J-O-S}.

\section{The conditions $(\gamma)$ and $\{\gamma\}$}
In this preliminary section, we study the connection between the conditions $(\gamma)$ and $\{\gamma\}$ and the equivalence of the various definitions of  Beurling-Bj\"orck type spaces.  Let $\omega$ and $\eta$ be two weight functions. Given parameters $k,l \in \mathbb{N}$ and $\lambda > 0$, we introduce the family of norms
$$
\|\varphi\|_{\omega,k,l,\lambda} = \max_{|\alpha|\leq k} \max_{|\beta| \leq l} \sup_{x \in \mathbb{R}^d} |x^{\beta}\varphi^{(\alpha)}(x)e^{\lambda\omega(x)}|.
$$
We define  $\widetilde{\mathcal{S}}^{\lambda}_{\eta,\omega}(\mathbb{R}^{d})$ as the Fr\'echet space consisting of all $\varphi \in \mathcal{S}(\mathbb{R}^d)$ such that 
\[
\|\varphi\|_{\widetilde{\mathcal{S}}^{k,\lambda}_{\eta,\omega}}:= \|\varphi\|_{\eta,k,k,\lambda}+\|\widehat{\varphi}\|_{\omega,k,k,\lambda}<\infty, \qquad \forall k \in \mathbb{N}.
\]
We set
\[
\widetilde{\mathcal{S}}_{(\eta)}^{(\omega)}(\mathbb{R}^{d})=\varprojlim_{\lambda \to \infty} 
\widetilde{\mathcal{S}}^{\lambda}_{\eta,\omega}(\mathbb{R}^{d}) \quad \mbox{ and }\quad \widetilde{\mathcal{S}}_{\{\eta\}}^{\{\omega\}}(\mathbb{R}^{d})=\varinjlim_{\lambda\to0^{+}} \widetilde{\mathcal{S}}^{\lambda}_{\eta,\omega}(\mathbb{R}^{d}).
\]
We use $\mathcal{S}_{[\eta]}^{[\omega]}(\mathbb{R}^{d})$ as a common notation for $\mathcal{S}_{(\eta)}^{(\omega)}(\mathbb{R}^{d})$ and $\mathcal{S}_{\{\eta\}}^{\{\omega\}}(\mathbb{R}^{d})$; a similar convention will be used for other spaces.
In accordance to this, $[\gamma]$ stands for $(\gamma)$ and $\{\gamma\}$.

 Let us point out that the spaces $\mathcal{S}_{[\eta]}^{[\omega]}(\mathbb{R}^{d})$ might be trivial, due to uncertainty principles for Fourier transform pairs  (cf.\ \cite{H-J1994,Hormander1991,H1950,Levinson1940}).  On the other hand, a classical result of Gelfand and Shilov  \cite{Gelfand-Shilov2} implies  that if there are $a>0$ and $b>0$ such that $\omega(x)=O(|x|^{a})$ and $\eta (x)=O(|x|^{b})$, then  $\mathcal{S}_{\{\eta\}}^{\{\omega\}}(\mathbb{R}^{d})\neq \{0\}$  whenever $1/a+1/b\geq1$, while $\mathcal{S}_{(\eta)}^{(\omega)}(\mathbb{R}^{d})\neq \{0\}$ if $1/a+1/b>1$ holds. In particular, if one of the two weight functions is $O(|x|)$ and the other one is polynomially bounded, then $\mathcal{S}_{[\eta]}^{[\omega]}(\mathbb{R}^{d})$ is always non-trivial. In this regard, we mention that condition $(\alpha)$ implies polynomial growth.
% Let us point out that the spaces $\mathcal{S}_{[\eta]}^{[\omega]}(\mathbb{R}^{d})$ might be trivial, due to uncertainty principles for Fourier transform pairs  (cf.\ \cite{H-J1994,Hormander1991,H1950,Levinson1940}).  On the other hand, a classical work of Gelfand and Shilov delivers an important sufficient condition that is meaningful in the context of this article (the property $(\alpha)$, which we always assume below, readily implies that the weight functions have at most polynomial growth). It can be shown (cf. \cite{Gelfand-Shilov2}) that if there are $a>0$ and $b>0$ such that $\omega(x)=O(|x|^{a})$ and $\eta (x)=O(|x|^{b})$, then  $\mathcal{S}_{\{\eta\}}^{\{\omega\}}(\mathbb{R}^{d})\neq \{0\}$  whenever $1/a+1/b\geq1$, while $\mathcal{S}_{(\eta)}^{(\omega)}(\mathbb{R}^{d})\neq \{0\}$ if $1/a+1/b>1$ holds. In particular, if one of the two weight functions is $O(|x|)$ and the other one satisfies $(\alpha)$, then  $\mathcal{S}_{[\eta]}^{[\omega]}(\mathbb{R}^{d})$ is automatically non-trivial. 
 
The following result is a generalization of \cite[Theorem 3.3]{C-C-K} and \cite[Corollary 2.9]{grochenig-zimmermann} (see also \cite[Theorem 2.3]{B-J-O-S}).

\begin{theorem} \label{equivalent} Let $\omega$ and $\eta$ be two weight functions satisfying $(\alpha)$. Suppose that $\mathcal{S}^{[\omega]}_{[\eta]}(\mathbb{R}^d) \neq \{ 0 \}$.  The following statements are equivalent:
\begin{enumerate}
\item[$(i)$] $\omega$ and $\eta$ satisfy $[\gamma]$.

\item[$(ii)$] $\mathcal{S}^{[\omega]}_{[\eta]}(\mathbb{R}^d) = \widetilde{\mathcal{S}}^{[\omega]}_{[\eta]}(\mathbb{R}^d)$ as locally convex spaces.
\item[$(iii)$] $\displaystyle \mathcal{S}^{[\omega]}_{[\eta]}(\mathbb{R}^d) = \{ \varphi \in \mathcal{S}'(\mathbb{R}^d) \, | \, \forall \lambda > 0 \,(\exists \lambda > 0) \, \forall \alpha \in \mathbb{N}^d \, : \, \newline \phantom{\mathcal{S}^{\omega}_{\eta}(\mathbb{R}^d) = \{} \sup_{x \in \mathbb{R}^d} |x^\alpha\varphi(x)|e^{\lambda \eta(x)} < \infty  \quad \mbox{and} \quad  \sup_{\xi \in \mathbb{R}^d} |\xi^{\alpha}\widehat\varphi(\xi)|e^{\lambda \omega(\xi)} < \infty\}$.
\item[$(iv)$] $\displaystyle \mathcal{S}^{[\omega]}_{[\eta]}(\mathbb{R}^d) = \{ \varphi \in \mathcal{S}'(\mathbb{R}^d) \, | \, \forall \lambda > 0 \, (\exists \lambda > 0) \, \forall \alpha \in \mathbb{N}^d \, : \, \newline \phantom{\mathcal{S}^{\omega}_{\eta}(\mathbb{R}^d) = \{} \int_{x \in \mathbb{R}^d} |\varphi^{(\alpha)}(x)|e^{\lambda \eta(x)} dx< \infty  \quad \mbox{and} \quad  \int_{\xi \in \mathbb{R}^d} |\widehat\varphi^{(\alpha)}(\xi)|e^{\lambda \omega(\xi)} d\xi< \infty\}$.
\item[$(v)$] $\mathcal{S}^{[\omega]}_{[\eta]}(\mathbb{R}^d) \subseteq \mathcal{S}(\mathbb{R}^d)$.
\end{enumerate}
\end{theorem}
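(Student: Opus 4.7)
The plan is to prove the cyclic chain $(i)\Rightarrow(ii)\Rightarrow(iii)\Rightarrow(iv)\Rightarrow(v)\Rightarrow(i)$. For $(i)\Rightarrow(ii)$, the inclusion $\widetilde{\mathcal{S}}^{\omega}_{\eta}\hookrightarrow\mathcal{S}^{\omega}_{\eta}$ is immediate since the $\widetilde{\mathcal{S}}^{k,\lambda}_{\eta,\omega}$-norms dominate the $\mathcal{S}^{\lambda}_{\eta,\omega}$-norm. For the reverse, condition $(\gamma)$ (resp.\ $(\gamma_{0})$) yields $(1+|x|)^{k}\le Ce^{\mu\omega(x)}$ for some $\mu>0$ and all $k$ (resp.\ for every $\mu>0$); together with the pointwise bounds defining $\mathcal{S}^{\lambda}_{\eta,\omega}$ this produces super-polynomial decay of both $\varphi$ and $\widehat{\varphi}$. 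To extract the semi-norm estimates $\sup|x^{\beta}\varphi^{(\alpha)}(x)|e^{\mu\eta(x)}$ and $\sup|\xi^{\beta}\widehat{\varphi}^{(\alpha)}(\xi)|e^{\mu\omega(\xi)}$ required by $\widetilde{\mathcal{S}}^{\omega}_{\eta}$, I would employ the short-time Fourier transform: with a Gaussian window $g$ one first derives a two-variable decay $|V_{g}\varphi(x,\xi)|\le Ce^{-\mu'(\eta(x)+\omega(\xi))}$ (spatial side directly, frequency side via Plancherel), and then recovers $\varphi$ through the STFT reconstruction formula and differentiates under the integral sign. Condition $(\alpha)$ enters through $\eta(t)\le L(\eta(t-x)+\eta(x))+\log C$, which allows one to split the weight $e^{\mu\eta(t)}$ across the window and the STFT factor in the reconstruction integral.

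The implications $(ii)\Rightarrow(iii)\Rightarrow(iv)\Rightarrow(v)$ are standard Fourier-analytic reformulations. From $(ii)$, the sup estimates of $(iii)$ follow from the identity $x^{\alpha}\varphi(x)=(-i)^{|\alpha|}(2\pi)^{-d}\mathcal{F}^{-1}[\partial^{\alpha}\widehat{\varphi}](x)$ together with the bounds on $\partial^{\alpha}\widehat{\varphi}$ provided by the enhanced norms of $\widetilde{\mathcal{S}}^{\omega}_{\eta}$. The step $(iii)\Rightarrow(iv)$ is obtained by absorbing a decaying exponential factor $e^{-\epsilon\eta}$ or $e^{-\epsilon\omega}$, integrable thanks to $(\gamma)$ (which is in force via the cycle), so that sup bounds are promoted to $L^{1}$ bounds. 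Finally, the $L^{1}$ estimates of $(iv)$ together with Fourier inversion force $\varphi\in\mathcal{S}(\mathbb{R}^{d})$, giving $(v)$.

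The crux, and where I expect the main obstacle, is $(v)\Rightarrow(i)$. Pick $\varphi_{0}\in\mathcal{S}^{\omega}_{\eta}$ non-zero; after a translation one may suppose $\varphi_{0}(0)\neq 0$. By the closed graph theorem the inclusion $\mathcal{S}^{\omega}_{\eta}\hookrightarrow\mathcal{S}(\mathbb{R}^{d})$ is continuous, so in the Beurling case there exist constants $C,\lambda>0$ with
\[
(1+|x|)\,|\psi(x)|\le C\,\|\psi\|_{\mathcal{S}^{\lambda}_{\eta,\omega}},\qquad \psi\in\mathcal{S}^{\omega}_{\eta};
\]
in the Roumieu case the analogous bound holds on $\mathcal{S}^{\lambda}_{\eta,\omega}$ for every $\lambda>0$, with $C=C(\lambda)$. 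Applying this to the translate $\psi(x)=\varphi_{0}(x-a)$ evaluated at $x=a$, and using $(\alpha)$ for $\eta$, which yields $\|\psi\|_{\eta,\lambda}\le C'e^{L\lambda\eta(a)}\|\varphi_{0}\|_{\eta,L\lambda}$ (the Fourier-side norm being translation invariant), gives $(1+|a|)\le C''e^{L\lambda\eta(a)}$. In the Beurling case this is precisely $(\gamma)$ for $\eta$; in the Roumieu case, since $\lambda$ can be chosen arbitrarily small, one obtains $\eta(a)/\log(1+|a|)\to\infty$, i.e.\ $(\gamma_{0})$. A symmetric argument with the modulations $\varphi_{0}(x)\mapsto e^{i\zeta\cdot x}\varphi_{0}(x)$, whose Fourier transforms are translates of $\widehat{\varphi_{0}}$ and whose $\mathcal{S}^{\lambda}_{\eta,\omega}$-norms are controlled via $(\alpha)$ for $\omega$, settles the corresponding statement for $\omega$. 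The delicate point throughout is keeping careful track of the quantifier on $\lambda$, which differs between the Beurling and Roumieu cases.
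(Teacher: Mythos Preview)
Your cyclic scheme has a genuine logical gap at the step $(iii)\Rightarrow(iv)$. You justify the passage from the sup bounds of $(iii)$ to the $L^{1}$ bounds of $(iv)$ by absorbing a factor $e^{-\epsilon\eta}$ (respectively $e^{-\epsilon\omega}$), ``integrable thanks to $(\gamma)$ (which is in force via the cycle)''. But in a cyclic proof $(i)\Rightarrow(ii)\Rightarrow(iii)\Rightarrow(iv)\Rightarrow(v)\Rightarrow(i)$ you may not invoke $(i)$ while establishing $(iii)\Rightarrow(iv)$: at that stage the only hypothesis is $(iii)$, and its equivalence with $(i)$ has not yet been closed. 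There is a second, structural problem as well: the conditions in $(iii)$ control the monomial multipliers $x^{\alpha}\varphi$ and $\xi^{\alpha}\widehat{\varphi}$ in sup-norm, whereas $(iv)$ asks for weighted $L^{1}$ control of the \emph{derivatives} $\varphi^{(\alpha)}$ and $\widehat{\varphi}^{(\alpha)}$. Even if integrability of $e^{-\epsilon\eta}$ were granted, promoting sup bounds on $x^{\alpha}\varphi$ to $L^{1}$ bounds on $\varphi^{(\alpha)}$ still requires weighted decay of derivatives, which is essentially the content of $(ii)$ and brings the circularity back in. The paper circumvents this by not routing the cycle through $(iv)$: it proves $(i)\Rightarrow(ii)\Rightarrow(iii)\Rightarrow(v)\Rightarrow(i)$, and then, with $(i)$ legitimately available, establishes $(i)\Rightarrow(iv)$ and $(iv)\Rightarrow(v)$ separately. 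Rearranging your chain in the same way (or proving $(iii)\Rightarrow(v)$ directly, which is easy) would repair the argument.

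Your treatment of $(v)\Rightarrow(i)$ via the closed graph theorem applied to translates (for $\eta$) and modulations (for $\omega$) is correct and genuinely different from the paper's approach, which builds a function $\varphi_{0}\in\mathcal{S}^{\omega}_{\eta}$ with $\varphi_{0}(j)=\delta_{j,0}$ and tests the inclusion against a carefully chosen series. Your argument is softer and avoids the explicit construction; the paper's, on the other hand, does not appeal to the closed graph theorem.
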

Following \cite{grochenig-zimmermann}, our proof of Theorem \ref{equivalent} is based on the mapping properties of the short-time Fourier transform (STFT). We fix the constants in the Fourier transform as
$$
\mathcal{F}(\varphi)(\xi)  = \widehat{\varphi}(\xi)=\int_{\mathbb{R}^{d}} \varphi(t) e^{-2\pi i \xi \cdot t}dt.
$$
The STFT of $f \in L^{2}(\mathbb{R}^{d})$ with respect to the window $\psi \in L^{2}(\mathbb{R}^{d})$ is given by
	\[ V_{\psi} f(x, \xi) = \int_{\mathbb{R}^{d}} f(t) \overline{\psi(t - x)} e^{- 2 \pi i \xi \cdot t} dt , \qquad (x, \xi) \in \mathbb{R}^{2d} . \]
The adjoint of $V_{\psi}: L^{2}(\mathbb{R}^{d})\to L^{2}(\mathbb{R}^{2d})$ is given by the (weak) integral
	\[ V_{\psi}^{*} F(t) =  \iint _{\mathbb{R}^{2d}} F(x, \xi) e^{2 \pi i \xi \cdot t}\psi(t-x) dx d\xi  . \]
A straightforward calculation shows that, whenever $(\chi, \psi)_{L^{2}} \neq 0$, then
	\begin{equation} 
		\label{eq:reconstructSTFT}
		\frac{1}{(\chi, \psi)_{L^{2}}} V_{\chi}^{*} \circ V_{\psi} = {\operatorname*{id}}_{L^{2}(\mathbb{R}^{d})} . 
	\end{equation}
Next, we introduce two additional function spaces. Given a parameter $\lambda >0$, we define $\mathcal{K}^{\lambda}_{\omega}(\mathbb{R}^{d})$ as the Fr\'echet space consisting of all $\varphi \in C^\infty(\mathbb{R}^d)$ such that $\|\varphi\|_{\omega,k,\lambda} := \|\varphi\|_{\omega,k,0,\lambda} < \infty$ for all $k \in \mathbb{N}$ and set 
$$
\mathcal{K}_{(\omega)}(\mathbb{R}^{d}) = \varprojlim_{\lambda \to\infty} \mathcal{K}^{\lambda}_{\omega}(\mathbb{R}^{d}) \quad \mbox{and}\quad  \mathcal{K}_{\{\omega\}}(\mathbb{R}^{d}) =\varinjlim_{\lambda \to 0^{+}} \mathcal{K}^{\lambda}_{\omega}(\mathbb{R}^{d}).
$$	
%We recall that $\mathcal{K}_{[\omega]}(\mathbb{R}^{d})$  stands for the common notation to simultaneously treat $\mathcal{K}_{(\omega)}(\mathbb{R}^{d})$  and $\mathcal{K}_{\{\omega\}}(\mathbb{R}^{d})$. 
Given  a parameter $\lambda > 0$, we define $C^{\lambda}_{\omega}(\mathbb{R}^{d})$ as the Banach space consisting of all $\varphi \in C(\mathbb{R}^d)$ such that $\|\varphi\|_{\omega,\lambda} < \infty$ and set 
$$
C_{(\omega)}(\mathbb{R}^{d}) = \varprojlim_{\lambda \to\infty} C^{\lambda}_{\omega}(\mathbb{R}^{d}) \quad \mbox{and}\quad  C_{\{\omega\}}(\mathbb{R}^{d}) =\varinjlim_{\lambda \to 0^{+}} C^{\lambda}_{\omega}(\mathbb{R}^{d}).
$$	

We need the following extension of  \cite[Theorem 2.7]{grochenig-zimmermann}. We write $\check{f}(t)=f(-t)$.
\begin{proposition}
\label{STFT Beurling-Bjorck} Let $\omega$ and $\eta$ be weight functions satisfying $(\alpha)$ and $[\gamma]$. Define the weight $(\eta\oplus\omega) (x,\xi)=\eta(x)+\omega(\xi)$ for $(x,\xi)\in\mathbb{R}^{2d}$. Fix a window $\psi \in \widetilde{\mathcal{S}}^{[\omega]}_{[\eta]}(\mathbb{R}^{d})$.
\begin{itemize}
\item[$(a)$] The linear mappings 
$$
V_{\check{\psi}}: \widetilde{\mathcal{S}}^{[\omega]}_{[\eta]}(\mathbb{R}^{d}) \to C_{[\eta\oplus\omega]}(\mathbb{R}^{2d}) \quad \mbox{ and } \quad V^{\ast}_{\psi}:C_{[\eta\oplus\omega]}(\mathbb{R}^{2d}) \to \widetilde{\mathcal{S}}^{[\omega]}_{[\eta]}(\mathbb{R}^{d})
$$
are continuous.
\item[$(b)$] The linear mappings 
$$
V_{\check{\psi}}: \mathcal{S}^{[\omega]}_{[\eta]}(\mathbb{R}^{d}) \to \mathcal{K}_{[\eta\oplus\omega]}(\mathbb{R}^{2d}) \quad \mbox{ and } \quad V^{\ast}_{\psi}: \mathcal{K}_{[\eta\oplus\omega]}(\mathbb{R}^{2d}) \to \mathcal{S}^{[\omega]}_{[\eta]}(\mathbb{R}^{d})
$$
are continuous.
\end{itemize}
\end{proposition}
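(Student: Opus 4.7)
The strategy is to extend the STFT-based approach of \cite{grochenig-zimmermann}—where the symmetric one-weight case $\omega=\eta$ was treated—to the present two-weight, anisotropic setting. Condition $(\alpha)$ supplies the splitting $e^{\lambda\eta(x)}\leq C^{\lambda}e^{\lambda L\eta(t)}e^{\lambda L\eta(x-t)}$ (and the analogue for $\omega$); condition $(\gamma)$ (resp.\ $(\gamma_0)$) supplies the integrability of $e^{-\lambda\omega}$ and $e^{-\lambda\eta}$ on $\mathbb{R}^{d}$ for $\lambda$ sufficiently large (resp.\ for every $\lambda>0$). Throughout, I will use Theorem~\ref{equivalent} to identify $\mathcal{S}^{\omega}_{\eta}$ with $\widetilde{\mathcal{S}}^{\omega}_{\eta}$, which in particular lets me freely apply polynomial multiplications, differentiations, and the Fourier transform to the window $\psi$ and stay in the same space.

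For part~(a), continuity of $V_{\check{\psi}}$: from $|V_{\check{\psi}}\varphi(x,\xi)|\leq\int|\varphi(t)||\psi(x-t)|\,dt$, multiplying by $e^{2\lambda\eta(x)}$ and applying $(\alpha)$ gives $e^{2\lambda\eta(x)}|V_{\check{\psi}}\varphi(x,\xi)|\leq C^{2\lambda}\|\varphi\|_{\eta,2\lambda L}\int|\psi(s)|e^{2\lambda L\eta(s)}\,ds$, where the integral is finite because $\psi$ has polynomial-weighted $\eta$-decay in $\widetilde{\mathcal{S}}^{\omega}_{\eta}$ (absorb $(1+|s|)^{-d-1}$ against the seminorm $\|\psi\|_{\eta,0,d+1,2\lambda L}$). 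The companion estimate with $e^{2\lambda\omega(\xi)}$ follows from the time-frequency covariance $V_{\check{\psi}}\varphi(x,\xi)=e^{-2\pi ix\cdot\xi}V_{\check{\widehat{\psi}}}\widehat{\varphi}(\xi,-x)$ applied with $(\widehat{\varphi},\widehat{\psi},\omega)$ in place of $(\varphi,\psi,\eta)$, and the geometric mean of the two bounds gives $e^{\lambda(\eta(x)+\omega(\xi))}|V_{\check{\psi}}\varphi(x,\xi)|$ controlled by a seminorm of $\varphi$ in $\widetilde{\mathcal{S}}^{\omega}_{\eta}$. For $V^{*}_{\psi}\colon C_{\eta\oplus\omega}\to\widetilde{\mathcal{S}}^{\omega}_{\eta}$, the seminorm $\|V^{*}_{\psi}F\|_{\eta,\lambda}$ is bounded by splitting $\eta(t)$ across $(t-x)+x$ via $(\alpha)$, estimating $|F(x,\xi)|\leq\|F\|_{\eta\oplus\omega,\mu}e^{-\mu\eta(x)}e^{-\mu\omega(\xi)}$, and invoking $(\gamma)/(\gamma_{0})$ to ensure $\int e^{-\mu\omega(\xi)}\,d\xi<\infty$; the Fourier-side seminorm $\|\widehat{V^{*}_{\psi}F}\|_{\omega,\lambda}$ uses the explicit formula $\widehat{V^{*}_{\psi}F}(\tau)=\iint F(x,\xi)e^{-2\pi i(\tau-\xi)\cdot x}\widehat{\psi}(\tau-\xi)\,dx\,d\xi$ and the symmetric estimate. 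The higher-order seminorms $\|\cdot\|_{\eta,k,k,\lambda}$ and $\|\widehat{\cdot}\|_{\omega,k,k,\lambda}$ of $\widetilde{\mathcal{S}}^{\omega}_{\eta}$ reduce to these $k=0$ cases by differentiating under the integral and expanding $t^{\beta}=((t-x)+x)^{\beta}$ via Leibniz.

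Part~(b) is then almost immediate: the continuity $V^{*}_{\psi}\colon\mathcal{S}_{\eta\oplus\omega}\to\mathcal{S}^{\omega}_{\eta}$ follows from the continuous inclusion $\mathcal{S}_{\eta\oplus\omega}\hookrightarrow C_{\eta\oplus\omega}$, part~(a), and Theorem~\ref{equivalent}, while for $V_{\check{\psi}}\colon\mathcal{S}^{\omega}_{\eta}\to\mathcal{S}_{\eta\oplus\omega}$ one uses the identities $\partial_{x}^{\alpha}V_{\check{\psi}}\varphi=V_{\check{\partial^{\alpha}\psi}}\varphi$ and $\partial_{\xi}^{\beta}V_{\check{\psi}}\varphi=V_{\check{\psi}}[(-2\pi i\,\cdot\,)^{\beta}\varphi]$ to express each $\partial_{x}^{\alpha}\partial_{\xi}^{\beta}V_{\check{\psi}}\varphi$ as an STFT with window $\partial^{\alpha}\psi\in\widetilde{\mathcal{S}}^{\omega}_{\eta}$ and signal $(-2\pi it)^{\beta}\varphi\in\mathcal{S}^{\omega}_{\eta}=\widetilde{\mathcal{S}}^{\omega}_{\eta}$, then applies part~(a). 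The main technical obstacle is bookkeeping the weight index: each application of $(\alpha)$ costs a factor of $2L$ in $\lambda$ and the polynomial absorption costs an additional $d+1$ orders, so one must confirm at each reduction that the resulting map $\widetilde{\mathcal{S}}^{\lambda_{0}}_{\eta,\omega}\to C^{\lambda_{0}/(2L)}_{\eta\oplus\omega}$ (and its reverse) remains cofinal in the projective (Beurling) or inductive (Roumieu) system—this is routine, but has to be tracked explicitly through every reduction.
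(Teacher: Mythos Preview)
Your overall strategy—split into time- and frequency-side estimates via the covariance identity $V_{\check{\psi}}\varphi(x,\xi)=e^{-2\pi i x\cdot\xi}V_{\check{\widehat{\psi}}}\widehat{\varphi}(\xi,-x)$, use $(\alpha)$ to separate the weight, and use $(\gamma)$ for integrability—is exactly what the paper does. But there is a logical circularity you must repair: you invoke Theorem~\ref{equivalent} to identify $\mathcal{S}^{\omega}_{\eta}$ with $\widetilde{\mathcal{S}}^{\omega}_{\eta}$, whereas in the paper the implication $(i)\Rightarrow(ii)$ of Theorem~\ref{equivalent} is \emph{deduced from} Proposition~\ref{STFT Beurling-Bjorck} together with the reconstruction formula~\eqref{eq:reconstructSTFT}. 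You therefore cannot appeal to it here.

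The circularity actually bites in your treatment of $V_{\check{\psi}}\colon\mathcal{S}^{\omega}_{\eta}\to\mathcal{S}_{\eta\oplus\omega}$ in part~(b). When you write $\partial_{\xi}^{\beta}V_{\check{\psi}}\varphi=V_{\check{\psi}}[(-2\pi i\,\cdot\,)^{\beta}\varphi]$ and claim the new signal lies in $\mathcal{S}^{\omega}_{\eta}=\widetilde{\mathcal{S}}^{\omega}_{\eta}$, you are using precisely the identification you are not yet entitled to: membership of $t^{\beta}\varphi$ in $\mathcal{S}^{\omega}_{\eta}$ would require control of $\partial^{\beta}\widehat{\varphi}$, which is not part of the definition of $\mathcal{S}^{\omega}_{\eta}$. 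The paper circumvents this by never treating $t^{\beta}\varphi$ as a new signal; instead it absorbs the polynomial factor directly into the weight via $(\gamma)$, namely $(1+|t|)^{k}\leq B\,e^{(k/A)\eta(t)}$, so that the $\xi$-derivatives of $V_{\check{\psi}}\varphi$ are bounded in terms of $\|\varphi\|_{\eta,\lambda L+k/A}$ alone. With this single adjustment your estimates go through, using only the $\mathcal{S}^{\lambda}_{\eta,\omega}$-norms of $\varphi$ and the $\widetilde{\mathcal{S}}$-seminorms of the window $\psi$ (which \emph{is} assumed to lie in $\widetilde{\mathcal{S}}^{\omega}_{\eta}$, so derivatives of $\psi$ are legitimate). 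Your invocation of Theorem~\ref{equivalent} for the $V^{*}_{\psi}$ direction in~(b) is harmless but unnecessary, since the inclusion $\widetilde{\mathcal{S}}^{\omega}_{\eta}\hookrightarrow\mathcal{S}^{\omega}_{\eta}$ is trivially continuous from the definitions.
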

\begin{proof}
It suffices to show that $V_{\check{\psi}}: \widetilde{\mathcal{S}}^{[\omega]}_{[\eta]}(\mathbb{R}^{d}) \to \mathcal{K}_{[\eta\oplus\omega]}(\mathbb{R}^{2d})$,  $V_{\check{\psi}}: \mathcal{S}^{[\omega]}_{[\eta]}(\mathbb{R}^{d}) \to C_{[\eta\oplus\omega]}(\mathbb{R}^{2d})$, and $V^{\ast}_{\psi}:C_{[\eta\oplus\omega]}(\mathbb{R}^{2d}) \to \widetilde{\mathcal{S}}^{[\omega]}_{[\eta]}(\mathbb{R}^{d})$ are continuous.  Indeed, the continuity of $V^{\ast}_{\psi}: \mathcal{K}_{[\eta\oplus\omega]}(\mathbb{R}^{2d}) \to \mathcal{S}^{[\omega]}_{[\eta]}(\mathbb{R}^{d})$ and $V_{\check{\psi}}: \widetilde{\mathcal{S}}^{[\omega]}_{[\eta]}(\mathbb{R}^{d}) \to C_{[\eta\oplus\omega]}(\mathbb{R}^{2d})$ would be immediate consequences, whereas, in view of \eqref{eq:reconstructSTFT},  we could then always factor $V_{\check{\psi}}$ on $\mathcal{S}^{[\omega]}_{[\eta]}(\mathbb{R}^{d})$ as a composition of continuous mappings,
\begin{equation}
\label{eq S=Stilde}
V_{\check{\psi}}: \mathcal{S}^{[\omega]}_{[\eta]}(\mathbb{R}^{d})\stackrel{V_{\check{\psi}}}{\longrightarrow} C_{[\eta\oplus\omega]}(\mathbb{R}^{2d}) \stackrel{V^{\ast}_{\chi}}{\longrightarrow} \widetilde{\mathcal{S}}^{[\omega]}_{[\eta]}(\mathbb{R}^{d})\stackrel{V_{\check{\psi}}}{\longrightarrow}  \mathcal{K}_{[\eta\oplus\omega]}(\mathbb{R}^{2d}),
\end{equation}
where, when $\psi\neq0$, the window $\chi$ is chosen such that $\chi\in \widetilde{\mathcal{S}}^{[\omega]}_{[\eta]}(\mathbb{R}^{d})$ and $(\psi, \check{\chi})_{L^2}=1$. (The relation \eqref{eq S=Stilde} actually yields $\mathcal{S}^{[\omega]}_{[\eta]}(\mathbb{R}^{d})=\widetilde{\mathcal{S}}^{[\omega]}_{[\eta]}(\mathbb{R}^{d})$.)

Suppose that $\psi\in \widetilde{\mathcal{S}}^{\lambda_0}_{\eta,\omega}(\mathbb{R}^{d})$, so that $\lambda_0 > 0$ is fixed in the Roumieu case but can be taken as large as needed  in the Beurling case. Let $A$ and $B=B_A$ be the constants occurring in $(\gamma)$ (in the Roumieu case, $A$ can be taken as large as needed due to $\{\gamma\}$). 
Furthermore, we assume that all constants occurring in $(\alpha)$ and $[\gamma]$ are the same for both $\omega$ and $\eta$.  We may also assume that $\lambda_0-k/A>0$. We first consider $V_{\check\psi}$. Let $\lambda < (\lambda_0-k/A)/L$ be arbitrary. For all $k \in \mathbb{N}$ and 
$\varphi \in \mathcal{S}^{\lambda L + \frac{k}{A} }_{\eta,\omega}(\mathbb{R}^{d})$, it holds that
\begin{align*}
\max_{|\alpha + \beta| \leq k} & \sup_{(x,\xi) \in \mathbb{R}^{2d}} |\partial^{\beta}_{\xi}\partial^{\alpha}_{x}V_{\check{\psi}}\varphi(x,\xi)|e^{\lambda \eta(x)}
\\
&
\leq (2\pi)^{k} \max_{|\alpha|\leq k}\sup_{x\in\mathbb{R}^{d}}e^{\lambda \eta(x)}\int_{\mathbb{R}^d} |\varphi(t)| (1+|t|)^{k} |\psi^{(\alpha)}(x-t)|dt
\\
&
 \leq (2\pi)^{k} \|\psi\|_{\eta,k,\lambda_0} \|\varphi\|_{\eta, \lambda L + \frac{k}{A}} \sup_{x\in\mathbb{R}^{d}}\int_{\mathbb{R}^d} e^{\lambda(\eta(x)- L\eta(t))}(1+|t|)^{k}e^{-\frac{k}{A}\eta(t)} e^{-\lambda_{0}\eta(x-t)}dt
\\
&
 \leq (2\pi)^{k} B^{\frac{k}{A}}C^{\lambda} \|\psi\|_{\eta,k,\lambda_0} \|\varphi\|_{\eta, \lambda L + \frac{k}{A}} \int_{\mathbb{R}^d} e^{-(\lambda_0 - \lambda L)\eta(y)} dy
\end{align*}
and
\begin{align*}
\max_{|\alpha + \beta| \leq k} &\sup_{(x,\xi) \in \mathbb{R}^{2d}}  |\partial^{\beta}_{\xi}\partial^{\alpha}_{x}V_{\check{\psi}}\varphi(x,\xi)|e^{\lambda \omega(\xi)} 
 = \max_{|\alpha +\beta| \leq k} \sup_{(x,\xi) \in \mathbb{R}^{2d}} |\partial^{\beta}_{\xi}\partial^{\alpha}_{x}V_{\mathcal{F}(\check{\varphi})}\widehat{\psi}(\xi,-x)|e^{\lambda \omega(\xi)} 
 \\
 &
\leq (2\pi)^{k} \max_{|\beta|\leq k}\sup_{\xi \in\mathbb{R}^{d}}e^{\lambda \omega(\xi)}\int_{\mathbb{R}^d} |\widehat{\psi}(t)| (1+|t|)^{k} |\widehat{\varphi}^{(\beta)}(\xi-t)|dt
 \\
& \leq (2\pi)^{k}  B^{\frac{k}{A}}C^{\lambda} \|\widehat{\varphi}\|_{\omega,k,L\lambda} \|\widehat{\psi}\|_{\omega, \lambda_{0}} \int_{\mathbb{R}^d} e^{-(\lambda_0 - \lambda L-k/A)\omega(t)} dt.
\end{align*}
These inequalities imply the continuity of $V_{\check{\psi}}: \widetilde{\mathcal{S}}^{[\omega]}_{[\eta]}(\mathbb{R}^{d}) \to \mathcal{K}_{[\eta\oplus\omega]}(\mathbb{R}^{2d})$. Taking $k=0$ in the above norm bounds, we also obtain that  $V_{\check{\psi}}: \mathcal{S}^{[\omega]}_{[\eta]}(\mathbb{R}^{d}) \to C_{[\eta\oplus\omega]}(\mathbb{R}^{2d})$ is continuous.
Next, we treat $V^*_\psi$. Let $\lambda <\lambda_0/L$ be arbitrary. For all  $k \in \mathbb{N}$ and $\Phi \in C^{\lambda L + \frac{k}{A}}_{\eta \oplus \omega}(\mathbb{R}^{2d})$ it holds that
\begin{align*}
\|&V_{\psi}^{\ast}\Phi\|_{\eta,k,\lambda}  \leq (2\pi)^{k}\max_{|\alpha|\leq k} \sup_{t\in\mathbb{R}^{d}} e^{\lambda \eta(t)}\sum_{\beta\leq \alpha}\binom{\alpha}{\beta} \iint_{\mathbb{R}^{2d}} |\Phi(x,\xi)| (1+|\xi|)^{k}|\psi^{(\beta)}(t-x)|dxd\xi
\\
&
\leq 
(4\pi)^{k}\|\psi\|_{\eta,k,\lambda_0}\|\Phi\|_{\eta\oplus\omega,\lambda L + \frac{k}{A}}\iint_{\mathbb{R}^{2d}}(1+|\xi|)^{k} e^{-\left(\frac{k}{A}+\lambda L\right)\omega(\xi)} e^{\lambda(\eta(t)- L\eta(x))} e^{-\lambda_{0}\eta(t-x)}dx d\xi
\\
&
\leq (4\pi)^{k} B^{\frac{k}{A}}C^{\lambda}\|\psi\|_{\eta,k,\lambda_0}\|\Phi\|_{\eta\oplus\omega,\lambda L + \frac{k}{A}} \iint_{\mathbb{R}^{2d}}  e^{-\lambda L\omega(\xi)- (\lambda_0-\lambda L)\eta(y)} dy d\xi
\end{align*}
and 
\begin{align*}
\|\mathcal{F}(V_{\psi}^{\ast}\Phi)\|_{\omega,k,\lambda} &= 
\max_{|\alpha|\leq k} \sup_{t\in\mathbb{R}^{d}} e^{\lambda \omega(t)}\left| \partial_{t}^{\alpha}\iint_{\mathbb{R}^{d}}  \Phi(x,\xi)e^{2\pi i \xi \cdot x} e^{-2\pi i t\cdot x}\widehat{\psi}(t-\xi)dxd\xi\right|
\\
&
\leq (4\pi)^{k} B^{\frac{k}{A}}C^{\lambda}\|\widehat{\psi}\|_{\omega,k,\lambda_0}\|\Phi\|_{\eta\oplus\omega,\lambda L+ \frac{k}{A}} \iint_{\mathbb{R}^{2d}}  e^{-\lambda L\eta(x)- (\lambda_0-\lambda L)\omega(\xi)} dx d\xi.
\end{align*}
Since $\|\, \cdot \, \|_{\eta,k,k,\lambda} \leq B^{\frac{k}{A}} \|\, \cdot \, \|_{\eta,k,\lambda + \frac{k}{A}}$ and $\|\, \cdot \, \|_{\omega,k,k,\lambda} \leq B^{\frac{k}{A}} \|\, \cdot \, \|_{\omega,k,\lambda + \frac{k}{A}}$ for all $\lambda > 0$ and $k \in \mathbb{N}$, the above inequalities show the continuity of $V^*_\psi$. 
\end{proof}
In order to be able to apply Proposition \ref{STFT Beurling-Bjorck}, we show the ensuing simple lemma.
 \begin{lemma}\label{non-trivial}
Let $\omega$ and $\eta$ be weight functions satisfying $(\alpha)$. If $\mathcal{S}^{[\omega]}_{[\eta]}(\mathbb{R}^{d}) \neq \{0\}$, then also $\widetilde{\mathcal{S}}^{[\omega]}_{[\eta]}(\mathbb{R}^{d}) \neq \{0\}$.
\end{lemma}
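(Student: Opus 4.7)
The plan is to construct a nonzero $\psi \in \widetilde{\mathcal{S}}^{\omega}_{\eta}(\mathbb{R}^d)$ from a given nonzero $\varphi \in \mathcal{S}^{\omega}_{\eta}(\mathbb{R}^d)$ by simultaneously mollifying $\varphi$ in space through convolution with a compactly supported smooth cutoff and multiplying the result by a Paley--Wiener factor, so that on the Fourier side the resulting convolution with $\widehat{\rho}$ is localized to a compact set. Concretely, fix $\phi \in C_{c}^{\infty}(\mathbb{R}^d)$ with $\langle\varphi,\phi\rangle \neq 0$ (possible since $\varphi \neq 0$ as a tempered distribution), set $\kappa := \check{\phi}$, and pick $\theta \in C_{c}^{\infty}(\mathbb{R}^d)$ nonnegative and not identically zero. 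Let $\rho(t) := \int \theta(\xi)\,e^{2\pi i \xi \cdot t}\,d\xi \in \mathcal{S}(\mathbb{R}^d)$, so that $\widehat{\rho} = \theta$ has compact support. Define
\[
\psi := \rho \cdot (\kappa * \varphi).
\]
Then $\psi(0) = \rho(0)\,(\kappa * \varphi)(0) = \bigl(\int\theta\bigr)\,\langle\varphi,\phi\rangle \neq 0$, so $\psi \neq 0$.

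For the spatial seminorms of $\widetilde{\mathcal{S}}^{\omega}_{\eta}$, Leibniz expands $\psi^{(\alpha)}$ as a sum of $\rho^{(\gamma)}\,(\kappa^{(\alpha-\gamma)} * \varphi)$. The compactness of $\operatorname{supp}\kappa$ and the local boundedness of $\eta$, combined with condition $(\alpha)$ for $\eta$, give
\[
\bigl|(\kappa^{(\alpha-\gamma)} * \varphi)(x)\bigr|\,e^{\mu\eta(x)} \lesssim \|\varphi\|_{\eta,\mu L},
\]
where $L$ is the constant from $(\alpha)$. The Schwartz decay of $\rho^{(\gamma)}$ then absorbs the polynomial factor $|x|^{|\beta|}$, yielding $|x^{\beta}\psi^{(\alpha)}(x)|\,e^{\mu\eta(x)} \lesssim 1$.

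For the Fourier seminorms, $\widehat{\psi} = \widehat{\rho} * \widehat{\kappa * \varphi} = \theta * (\widehat{\kappa}\,\widehat{\varphi})$. The compact support of $\theta$ localizes the convolution: only $\zeta$ with $\xi - \zeta \in \operatorname{supp}\theta$ contribute to $\widehat{\psi}(\xi)$. On this region, condition $(\alpha)$ for $\omega$ yields $\omega(\zeta) \geq \omega(\xi)/L - M$ with $M$ depending only on $\operatorname{supp}\theta$, so the exponential decay of $\widehat{\varphi}$ transfers to give $|\widehat{\psi}(\xi)|\,e^{\mu\omega(\xi)} \lesssim \|\widehat{\varphi}\|_{\omega,\mu L}$. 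Polynomial factors $\xi^{\beta}$ are handled via $\xi^{\beta} = \sum_{\gamma}\binom{\beta}{\gamma}(\xi-\zeta)^{\gamma}\zeta^{\beta-\gamma}$, with the first factor absorbed by the compact support of $\theta$ and the second by the Schwartz decay of $\widehat{\kappa}$; derivatives $\widehat{\psi}^{(\alpha)}$ are obtained by moving $\partial^{\alpha}$ onto $\theta \in C_{c}^{\infty}$.

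The Beurling and Roumieu cases are handled uniformly: in the Beurling case every $\mu > 0$ is admissible, while in the Roumieu case, given $\varphi \in \mathcal{S}^{\lambda_{0}}_{\eta,\omega}$, the construction yields $\psi \in \widetilde{\mathcal{S}}^{\mu}_{\eta,\omega}$ for all $\mu \leq \lambda_{0}/L$, which suffices for membership in the inductive limit. The main point requiring care is the Fourier-side estimate, where the compact support of $\theta$ (equivalently, the Paley--Wiener nature of $\rho$) is essential: it restricts the domain on which $(\alpha)$ for $\omega$ must be applied, so that $M = \sup_{u \in \operatorname{supp}\theta}\omega(u) + \tfrac{1}{L}\log C$ is finite and the decay $e^{-\lambda\omega(\zeta)}$ of $\widehat{\varphi}$ can be pushed through the convolution and recovered as $e^{-\mu\omega(\xi)}$.
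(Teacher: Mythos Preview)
Your construction is essentially identical to the paper's: both take a nonzero $\varphi\in\mathcal{S}^{\omega}_{\eta}(\mathbb{R}^d)$, convolve it with a function in $\mathcal{D}(\mathbb{R}^d)$, and multiply the result by the inverse Fourier transform of another function in $\mathcal{D}(\mathbb{R}^d)$, verifying nonvanishing at the origin. The paper states the construction in one line without writing out the estimates, while you spell them out; your argument is correct and matches the paper's approach.
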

\begin{proof}
Let  $\varphi \in \mathcal{S}^{[\omega]}_{[\eta]}(\mathbb{R}^{d}) \backslash \{0\}$. Pick $\psi, \chi \in \mathcal{D}(\mathbb{R}^d)$ such that $\int_{\mathbb{R}^d} \varphi(x)\chi(-x) dx = 1$ and $\int_{\mathbb{R}^d} \psi(x) dx = 1$. Then, $\varphi_0 = (\varphi \ast \chi)\mathcal{F}^{-1}(\psi) \in \widetilde{\mathcal{S}}^{[\omega]}_{[\eta]}(\mathbb{R}^{d})$ and $\varphi_0 \not \equiv 0$ (as $\varphi_0(0) = 1$).
\end{proof}
\begin{proof}[Proof of Theorem \ref{equivalent}]
$(i) \Rightarrow (ii)$ In view of Lemma \ref{non-trivial}, this follows from Proposition \ref{STFT Beurling-Bjorck} and the reconstruction formula \eqref{eq:reconstructSTFT}.

$(ii) \Rightarrow (iii)$ Trivial.

$(iii) \Rightarrow (v)$ \emph{and} $(iv) \Rightarrow (v)$ These implications follow from the fact that $\mathcal{S}(\mathbb{R}^d)$ consists precisely of all those $\varphi \in \mathcal{S}'(\mathbb{R^d})$ such that
$$
\sup_{x \in \mathbb{R}^d} |x^\alpha\varphi(x)| < \infty  \quad \mbox{and} \quad  \sup_{\xi \in \mathbb{R}^d} |\xi^{\alpha}\widehat\varphi(\xi)| < \infty 
$$
for all $\alpha \in \mathbb{N}^d$ (see e.g.\ \cite[Corollary 2.2]{C-C-K}). 

$(v) \Rightarrow (i)$ Since the Fourier transform is an isomorphism from $\mathcal{S}^{[\omega]}_{[\eta]}(\mathbb{R}^d)$ onto $\mathcal{S}_{[\omega]}^{[\eta]}(\mathbb{R}^d)$ and from $\mathcal{S}(\mathbb{R}^d)$ onto itself, it is enough to show that $\eta$ satisfies $[\gamma]$. We start by constructing $\varphi_0 \in \mathcal{S}^{[\omega]}_{[\eta]}(\mathbb{R}^d)$ such that $\varphi_{0}(j) = \delta_{j,0}$ for all $j \in \mathbb{Z}^d$. Choose $\psi \in \mathcal{S}^{[\omega]}_{[\eta]}(\mathbb{R}^d)$ such that $\psi(0) = 1$. Set 
$$
\chi(x) = \int_{[-\frac{1}{2}, \frac{1}{2}]^d} e^{-2\pi ix \cdot t} dt, \qquad x \in \mathbb{R}^d.
$$
Then, $\chi(j) = \delta_{j,0}$ for all $j \in \mathbb{Z}^d$. Hence, $\varphi_0 = \psi \chi$ satisfies all requirements.  Let $(\lambda_j)_{j \in \mathbb{Z}^d}$ be an arbitrary multi-indexed sequence of positive numbers such that $\lambda_j \to \infty$ as $|j| \to \infty$ ($(\lambda_j)_{j \in \mathbb{Z}^d} = (\lambda)_{j \in \mathbb{Z}^d}$ for $\lambda > 0$ in the Roumieu case). Consider
$$
\varphi = \sum_{j \in \mathbb{Z}^d} \frac{e^{-\lambda_j\eta(j)}}{(1+|j|)^{d+1}} \varphi_0( \, \cdot - j) \in \mathcal{S}^{[\omega]}_{[\eta]}(\mathbb{R}^d).
$$
Since $\mathcal{S}^{[\omega]}_{[\eta]}(\mathbb{R}^d) \subseteq \mathcal{S}(\mathbb{R}^d)$, there is $C > 0$ such that
$$
 \frac{e^{-\lambda_j\eta(j)}}{(1+|j|)^{d+1}} = |\varphi(j)| \leq \frac{C}{(1+|j|)^{d+2}}
$$
for all $j \in \mathbb{Z}^d$. Hence,
$$
\log(1+|j|) \leq \lambda_j \eta(j) + \log C
$$
for all $j \in \mathbb{Z}^d$. As $\eta$ satisfies $(\alpha)$ and $(\lambda_j)_{j \in \mathbb{Z}^d}$ is arbitrary, the latter inequality is equivalent to $[\gamma]$.

$(i) \Rightarrow (iv)$ Let us denote the space in the right-hand side of $(iv)$ by $\mathcal{S}_{[\eta],1}^{[\omega],1} (\mathbb{R}^d)$. Since we already showed that $(i) \Rightarrow (ii)$ and we have that $\widetilde{\mathcal{S}}^{[\omega]}_{[\eta]} (\mathbb{R}^d) \subseteq \mathcal{S}_{[\eta],1}^{[\omega],1} (\mathbb{R}^d)$, it suffices to show that $\mathcal{S}_{[\eta],1}^{[\omega],1} (\mathbb{R}^d) \subseteq \widetilde{\mathcal{S}}^{[\omega]}_{[\eta]} (\mathbb{R}^d)$. By  Proposition \ref{STFT Beurling-Bjorck}$(a)$, Lemma \ref{non-trivial} and the reconstruction formula \eqref{eq:reconstructSTFT}, it suffices to show that $V_{\check{\psi}}(\varphi) \in C_{[\eta \oplus \omega]}(\mathbb{R}^{2d})$ for all $\varphi \in \mathcal{S}_{[\eta],1}^{[\omega],1} (\mathbb{R}^d)$, where $\psi \in \widetilde{\mathcal{S}}^{[\omega]}_{[\eta]} (\mathbb{R}^d)$ is a fixed non-zero window.  But the latter can be shown by using the same method employed in the first part of the proof of Proposition \ref{STFT Beurling-Bjorck}.
\end{proof}

\section{Proof of Theorem \ref{nuclearity theorem Beurling-Bjorck}}
Our proof of Theorem \ref{nuclearity theorem Beurling-Bjorck} is based on Proposition \ref{STFT Beurling-Bjorck}$(b)$  and the next two auxiliary results. 
\begin{proposition}
\label{proposition nuclear}
Let $\eta$ be a weight function satisfying $(\alpha)$ and $[\gamma]$. Then, $\mathcal{K}_{[\eta]}(\mathbb{R}^{d})$ is nuclear.
\end{proposition}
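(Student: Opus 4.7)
The strategy is to verify the Grothendieck--Pietsch criterion for nuclearity: I would exhibit a fundamental system of Hilbert seminorms on $\mathcal{S}_{(\eta)}(\mathbb{R}^d)$ such that the canonical maps between consecutive completions are Hilbert--Schmidt. For $k\in\mathbb{N}$ and $\lambda>0$, introduce
\[
q_{k,\lambda}(\varphi)^2 := \sum_{|\alpha|\leq k}\int_{\mathbb{R}^d}|\varphi^{(\alpha)}(x)|^2 e^{2\lambda\eta(x)}\,dx,
\]
and check that $\{q_{k,\lambda}\}$ defines the same Fr\'echet topology as $\{\|\cdot\|_{\eta,k,\lambda}\}$. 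Since $(\gamma)$ gives $\eta(x)\geq A\log(1+|x|)-\log B$, one has $\int e^{-2c\eta}\,dx<\infty$ for $c$ large, whence $q_{k,\lambda}\lesssim \|\cdot\|_{\eta,k,\lambda+c}$. Conversely, the local Sobolev embedding $|u(x)|^2\lesssim \int_{B_1(x)}\sum_{|\alpha|\leq m}|u^{(\alpha)}(y)|^2\,dy$ (valid for $m>d/2$), combined with $(\alpha)$ to shift the weight from $\eta(x)$ to $\eta(y)$ for $y\in B_1(x)$ (at the cost of a factor $L$ in the exponent), yields $\|\cdot\|_{\eta,k,\lambda}\lesssim q_{k+m,L\lambda}$.

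The heart of the argument is to show that the inclusion $\hat H_{k',\lambda'}\hookrightarrow \hat H_{k,\lambda}$ between $q$-completions is Hilbert--Schmidt whenever $k'-k>d/2$ and $\lambda'-\lambda$ is sufficiently large. I would proceed by localization. Pick a smooth partition of unity $\{\chi_n\}_{n\in\mathbb{Z}^d}$ subordinate to $\{B_2(n)\}$, with uniform $C^\infty$ bounds and bounded overlap. Using $(\alpha)$, on $\operatorname{supp}\chi_n$ the weight $e^{\lambda\eta(x)}$ is comparable to $e^{L\lambda\eta(n)}$ up to constants, leading to the two-sided sandwich
\[
\sum_{n} e^{2(\lambda/L)\eta(n)}\|\chi_n\varphi\|_{H^k(B_2(n))}^2 \lesssim q_{k,\lambda}(\varphi)^2 \lesssim \sum_{n} e^{2L\lambda\eta(n)}\|\chi_n\varphi\|_{H^k(B_2(n))}^2.
\]
After translating each $B_2(n)$ to $B_2(0)$, the inclusion $\hat H_{k',\lambda'}\hookrightarrow \hat H_{k,\lambda}$ factors through a block-diagonal operator on weighted $\ell^2$-direct sums of copies of $H^{k'}(B_2(0))\to H^k(B_2(0))$, whose $n$-th block is $e^{-(\lambda'/L-L\lambda)\eta(n)}\iota$, with $\iota$ the classical Sobolev embedding. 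The latter is Hilbert--Schmidt for $k'-k>d/2$ (by Weyl's law, its singular values decay like $n^{-(k'-k)/d}$), and the total Hilbert--Schmidt norm squared is bounded by $C\|\iota\|_{\mathrm{HS}}^2\sum_n e^{-2(\lambda'/L-L\lambda)\eta(n)}$, which converges by $(\gamma)$ once $2(\lambda'/L-L\lambda)A>d$. Grothendieck--Pietsch then yields the nuclearity of $\mathcal{S}_{(\eta)}(\mathbb{R}^d)$.

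For the Roumieu space $\mathcal{S}_{\{\eta\}}(\mathbb{R}^d)=\varinjlim_{\lambda\to 0^+}\mathcal{S}^\lambda_\eta(\mathbb{R}^d)$, the same localization shows that for every $\lambda>0$ there exists $\lambda'\in(0,\lambda/L^2)$ such that $\mathcal{S}^\lambda_\eta\hookrightarrow \mathcal{S}^{\lambda'}_\eta$ is a nuclear map between Fr\'echet spaces; the summability of $\sum_n e^{-2(\lambda/L-L\lambda')\eta(n)}$ is now provided by $(\gamma_0)$ (which makes $e^{-c\eta(n)}$ decay faster than any polynomial for every $c>0$), and the countable inductive limit inherits nuclearity. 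The main obstacle is the localization sandwich: the factor $L$ in $(\alpha)$ prevents a one-weight equivalence between $q_{k,\lambda}$ and its localized form, so one must work with the two distinct exponents $\lambda/L$ and $L\lambda$ and absorb the loss by taking the weight gap between $\lambda$ and $\lambda'$ sufficiently large (respectively small, in the Roumieu case). Once this bookkeeping is settled, nuclearity reduces to Weyl's asymptotics for the Sobolev embedding on a fixed bounded domain together with the summability of $e^{-c\eta}$ over $\mathbb{Z}^d$ guaranteed by $(\gamma)$ or $(\gamma_0)$.
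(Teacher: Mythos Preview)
Your approach is genuinely different from the paper's. The paper gives two short proofs: (i) it quotes the Gelfand--Shilov criterion (conditions $(P)$ and $(N)$ for the weight sequence $(e^{n\eta})_n$) for the Beurling case, and for the Roumieu case rewrites $\mathcal{S}_{\{\eta\}}(\mathbb{R}^d)=\varinjlim_{n}\varprojlim_{k\geq n}\mathcal{S}^{1/n-1/k}_{\eta}(\mathbb{R}^d)$ as an inductive limit of nuclear Fr\'echet spaces; (ii) it regularizes the weight to $\Psi_\lambda=\exp(\lambda L\,\chi\!*\!\eta)$ and shows that $\mathcal{S}_\eta(\mathbb{R}^d)$ coincides with a projective/inductive limit of the multiplier-twisted spaces $\Psi_\lambda^{-1}\mathcal{S}(\mathbb{R}^d)$, each isomorphic to $\mathcal{S}(\mathbb{R}^d)$, so nuclearity is inherited. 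Your route via weighted $L^2$-Sobolev seminorms, a lattice partition of unity, and the Hilbert--Schmidt property of $H^{k'}(B)\hookrightarrow H^k(B)$ is a third, more hands-on argument; it has the virtue of being self-contained (no black-box nuclearity of $\mathcal{S}$ or Gelfand--Shilov criterion), at the price of some bookkeeping with the factor $L$ from $(\alpha)$ in the sandwich estimates.

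For the Beurling case your outline is sound. The Roumieu paragraph, however, has a gap. You conclude that the $(LF)$-space $\mathcal{S}_{\{\eta\}}(\mathbb{R}^d)=\varinjlim_{\lambda\to 0^+}\mathcal{S}^\lambda_\eta(\mathbb{R}^d)$ is nuclear from the nuclearity of the linking maps $\mathcal{S}^\lambda_\eta\hookrightarrow\mathcal{S}^{\lambda'}_\eta$ between Fr\'echet steps. That implication is not a standard permanence property: what is standard is that a countable inductive limit of \emph{nuclear} spaces is nuclear, not that nuclear linking maps between non-nuclear Fr\'echet steps force nuclearity of the limit. Concretely, a continuous seminorm $p$ on the inductive limit restricts to continuous seminorms on each step, and nuclearity of the linking maps only tells you that each restriction $E_n\to\hat E_p$ is nuclear; gluing these into a single nuclear factorization $\hat E_q\to\hat E_p$ for some continuous $q$ on $E$ requires more. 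The fix is exactly what the paper does in its proof (i): under $(\gamma_0)$ one has $\sum_{j}e^{-c\eta(j)}<\infty$ for \emph{every} $c>0$, so you can rewrite the inductive limit as $\varinjlim_n F_n$ with $F_n=\varprojlim_{k\geq n}\hat H_{k,\,1/n-1/k}$ (or, in your Hilbert picture, a Fr\'echet space whose defining seminorms have both $k$ and the weight exponent increasing); your localization argument then shows each $F_n$ is nuclear, and the stability of nuclearity under countable inductive limits finishes the job.
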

\begin{proof} 
We present two different proofs:

$(i)$ The first one is based on a classical result of Gelfand and Shilov \cite[p.~181]{Gelfand-Shilov3}. The nuclearity of $\mathcal{K}_{(\eta)}(\mathbb{R}^{d})$ is a particular case of this result, as the increasing sequence of weight functions $(e^{n\eta})_{n\in \mathbb{N}}$ satisfies the so-called $(P)$ and $(N)$ conditions because of $(\gamma)$. For the Roumieu case, note that
$$
\mathcal{K}_{\{\eta\}}(\mathbb{R}^{d}) = \varinjlim_{n \in \mathbb{Z}_+}\varprojlim_{k \geq n}\mathcal{K}^{\frac{1}{n}-\frac{1}{k}}_{\eta}(\mathbb{R}^{d})
$$
as locally convex spaces. The above mentioned result implies that, for each $n \in \mathbb{Z}_+$, the Fr\'echet space $\varprojlim_{k \geq n}\mathcal{K}^{\frac{1}{n}-\frac{1}{k}}_{\eta}(\mathbb{R}^{d})$ is nuclear, as the increasing sequence of weight functions  $(e^{\left(\frac{1}{n}-\frac{1}{k}\right)\eta})_{k \geq n}$ satisfies the conditions $(P)$ and $(N)$ because of $\{\gamma\}$. The result now follows from the fact that the inductive limit of a countable spectrum of nuclear spaces is again nuclear \cite{Treves}.

$(ii)$ Next, we give a proof that simultaneously applies to the Beurling and Roumieu case and only makes use of the fact that $\mathcal{S}(\mathbb{R}^d)$ is nuclear. Our argument adapts an idea of Hasumi \cite{hasumi61}. Fix a non-negative function $\chi\in\mathcal{D}(\mathbb{R}^{d})$ such that $\int_{\mathbb{R}^{d}}\chi(y)dy=1$ and for each $\lambda>0$ let 
$$
\Psi_\lambda(x)=  \exp\left( \lambda L\int_{\mathbb{R}^{d}} \chi(y)\eta(x+y) dy\right).
$$
It is clear from the assumption $(\alpha)$ that $\eta$ should have at most polynomial growth. So, we fix $q>0$ such that $(1+|x|)^{-q}\eta(x)$ is bounded.  We obtain that there are positive constants $c_{\lambda}$, $C_\lambda$, $C_{\lambda,\beta}$, and $C_{\lambda_1,\lambda_2, \beta}$ such that 
\begin{equation}
\label{inequalities regularized weight}
c_\lambda \exp\left(\lambda \eta( x)\right)\leq \Psi_{\lambda} (x)\leq C_\lambda \exp(L^{2} \lambda \eta(x)), \quad |\Psi_{\lambda}^{(\beta)}(x)|\leq C_{\lambda, \beta}(1+|x|)^{q|\beta|}\Psi_{\lambda}(x) ,
\end{equation}
and 
\begin{equation}
\label{inequalities regularized weight 2}
 \left|\left(\frac{\Psi_{\lambda_1}}{\Psi_{\lambda_2}} 
  \right)^{(\beta)}(x)\right|\leq C_{\lambda_1,\lambda_2, \beta}(1+|x|)^{q|\beta|} ,
\end{equation}
for each $\beta\in\mathbb{N}^{d}$, and $\lambda_1\leq \lambda_2$.
 Let $X_\lambda = \Psi_{\lambda }^{-1}\mathcal{S}(\mathbb{R}^{d})$ and topologize each of these spaces in such a way that the multiplier mappings $M_{\Psi_\lambda}: X_\lambda \to \mathcal{S}(\mathbb{R}^{d}): \ \varphi\mapsto \Psi_\lambda \cdot \varphi$ are isomorphisms. The bounds \eqref{inequalities regularized weight 2} guarantee that the inclusion mappings $X_{\lambda_2}\to X_{\lambda_1}$ are continuous whenever $\lambda_{1}\leq \lambda_{2}$. If $A$ is a constant such that $(\gamma)$ holds for $\eta$, then the inequalities \eqref{inequalities regularized weight} clearly yield
 $$
 \max_{|\beta|\leq k} \sup_{x\in\mathbb{R}^{d}} (1+|x|)^{k} |(\Psi_{\lambda}\varphi)^{(\beta)}(x)|\leq B_{k,\lambda,A} \|\varphi\|_{\eta,k,\lambda L^2+(1+q)k/A}
 $$
and 
\begin{align*}
\|\varphi\|_{\eta,k,\lambda}&\leq \frac{1}{c_\lambda} \max_{|\beta|\leq k}\|\Psi_{\lambda} \varphi^{(\beta)} \|_{L^{\infty}(\mathbb{R}^{d})} 
\\
&
\leq   \frac{1}{c_\lambda} \max_{|\beta|\leq k} \left(\|(\Psi_{\lambda} \varphi)^{(\beta)} \|_{L^{\infty}(\mathbb{R}^{d})}+\sum_{\nu<\beta}\binom{\beta}{\nu}\left\| \Psi_{\lambda}^{(\beta-\nu)} \varphi^{(\nu)}\right\|_{L^{\infty}(\mathbb{R}^{d})} \right)
\\
&
\leq b'_{k,\lambda} \left(\max_{|\beta|\leq k} \|(\Psi_{\lambda} \varphi)^{(\beta)} \|_{L^{\infty}(\mathbb{R}^{d})}+ \max_{|\beta|\leq k-1}\|(1+|\cdot |)^{qk}\Psi_{\lambda} \varphi^{(\beta)} \|_{L^{\infty}(\mathbb{R}^{d})} \right) 
\\
&
\leq b_{k,\lambda} \max_{|\beta|\leq k} \|(1+|\cdot |)^{qk(k+1)/2}(\Psi_{\lambda} \varphi)^{(\beta)} \|_{L^{\infty}(\mathbb{R}^{d})},
\end{align*}
for some positive constants $B_{k,\lambda, A}$, $b'_{k,\lambda}$ and $b_{k,\lambda}$. This gives, as locally convex spaces,
$$
\mathcal{K}_{(\eta)}(\mathbb{R}^{d})=\varprojlim_{n \in \mathbb{Z}_+} 
X_{n}
$$
and the continuity of the inclusion $X_{\lambda}\to \mathcal{K}_{\eta}^{\lambda}(\mathbb{R}^{d})$. If in addition $\{\gamma\}$ holds, we can choose $A$ arbitrarily large above. Consequently, the inclusion $\mathcal{K}_{\eta}^{L^2\lambda+\varepsilon}(\mathbb{R}^{d})\to X_{\lambda} $ is continuous as well for any arbitrary $\varepsilon>0$, whence we infer the topological  equality
$$
\mathcal{K}_{\{\eta\}}(\mathbb{R}^{d})=\varinjlim_{n \in \mathbb{Z}_+} 
X_{1/n}.
$$
The claimed nuclearity of $\mathcal{K}_{(\eta)}(\mathbb{R}^{d})$ and $\mathcal{K}_{\{\eta\}}(\mathbb{R}^{d})$ therefore follows from that of $\mathcal{S}(\mathbb{R}^{d})$ and the well-known  stability  of this property under projective and (countable) inductive limits \cite{Treves}.
\end{proof}
 
The next result is essentially due to Petzsche \cite{Petzsche}. Given a multi-indexed sequence $a = (a_j)_{j \in \mathbb{Z}^d}$ of positive numbers, we define $l^r(a) = l^r(\mathbb{Z}^d;a)$, $r \in \{1,\infty\}$, as the Banach space consisting of all $c = (c_j)_{j \in \mathbb{Z}^d} \in \mathbb{C}^{\mathbb{Z}^d}$ such that $\|c\|_{l^r(a)} :=  \|(c_ja_j)_{j \in \mathbb{Z}^d}\|_{l^r} < \infty$. 
\begin{proposition}[{\cite[Satz 3.5 and Satz 3.6]{Petzsche}}]  \label{P-trick} \mbox{}
\begin{itemize}
\item[$(a)$]  Let  $A = (a_{n})_{n \in \mathbb{N}} = (a_{n,j})_{n \in \mathbb{N}, j \in \mathbb{Z}^d}$ be a matrix of positive numbers such that $a_{n,j} \leq a_{n+1,j}$ for all $n \in \mathbb{N}, j \in \mathbb{Z}^d$. Consider the K\"othe echelon spaces $\lambda^r(A) := \varprojlim_{n \in \mathbb{N}} l^r(a_n)$, $r \in \{1,\infty\}$. Let $E$ be a nuclear locally convex Hausdorff space and suppose that there are continuous linear mappings $T:\lambda^1(A) \rightarrow E$ and $S: E \rightarrow \lambda^\infty(A)$ such that 
$S \circ T = \iota$, where $\iota: \lambda^1(A) \rightarrow \lambda^\infty(A)$ denotes the natural embedding. Then, $\lambda^1(A)$ is nuclear.
\item[$(b)$]  Let  $A = (a_{n})_{n \in \mathbb{N}} = (a_{n,j})_{n \in \mathbb{N}, j \in \mathbb{Z}^d}$ be a matrix of positive numbers such that $a_{n+1,j} \leq a_{n,j}$ for all $n \in \mathbb{N}, j \in \mathbb{Z}^d$. Consider the K\"othe co-echelon spaces $\lambda^r\{A\} := \varinjlim_{n \in \mathbb{N}} l^r(a_n)$, $r \in \{1,\infty\}$. Let $E$ be a locally convex Hausdorff space such that its strong dual $E'_{b}$ is nuclear and suppose that there are continuous linear mappings $T:\lambda^1\{A\} \rightarrow E$ and $S: E \rightarrow \lambda^\infty\{A\}$ such that 
$S \circ T = \iota$, where $\iota: \lambda^1\{A\} \rightarrow \lambda^\infty\{A\}$ denotes the natural embedding. Then, $\lambda^1\{A\}$ is nuclear. 
\end{itemize}
\end{proposition}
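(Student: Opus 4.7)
The plan for both parts is to invoke the Grothendieck--Pietsch criterion for nuclearity of K\"othe spaces. In case $(a)$, $\lambda^1(A)$ is nuclear iff for every $n$ there exists $m\geq n$ with $\sum_{j}a_{n,j}/a_{m,j}<\infty$, and in case $(b)$, $\lambda^1(A)$ is nuclear iff for every $n$ there exists $m\geq n$ with $\sum_{j}a_{m,j}/a_{n,j}<\infty$. In both cases the summability is equivalent to the canonical inclusion $l^1(b)\hookrightarrow l^\infty(c)$ (with the appropriate comparison of weights) being a nuclear operator, as one sees directly from the standard representation using $\|e_j^*\|_{l^1(b)^*}=1/b_j$ and $\|e_j\|_{l^\infty(c)}=c_j$. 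The strategy is to derive nuclearity of such inclusions from the factorization $\iota=S\circ T$ through the nuclear space $E$.

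For part $(a)$, fix $n$ and let $\pi_n:\lambda^\infty(A)\to l^\infty(a_n)$ be the canonical projection. Continuity of $\pi_n\circ S:E\to l^\infty(a_n)$ yields a continuous seminorm $q$ on $E$ and a bounded extension $\overline{\pi_n\circ S}:\widehat{E}_q\to l^\infty(a_n)$. Since $E$ is nuclear, there is a continuous seminorm $p\geq q$ on $E$ such that the canonical map $\widehat{E}_p\to\widehat{E}_q$ is nuclear. The continuity of $T:\lambda^1(A)\to E\to\widehat{E}_p$ provides some $m\geq n$ and a bounded extension $\widetilde{T}:l^1(a_m)\to\widehat{E}_p$. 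The composition
\[
l^1(a_m)\xrightarrow{\widetilde{T}}\widehat{E}_p\longrightarrow\widehat{E}_q\xrightarrow{\overline{\pi_n\circ S}} l^\infty(a_n)
\]
is nuclear and, restricted to $\lambda^1(A)$, equals $\pi_n\circ\iota$ by hypothesis. Since $\lambda^1(A)$ contains all finitely supported sequences (which are dense in $l^1(a_m)$), this nuclear map coincides with the natural inclusion $l^1(a_m)\hookrightarrow l^\infty(a_n)$, and we conclude $\sum_j a_{n,j}/a_{m,j}<\infty$.

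Part $(b)$ runs dually, with bounded sets playing the role of continuous seminorms. Nuclearity of the $(DF)$-space $E$ means that for every bounded $B\subset E$ there is a larger bounded $B'$ such that the canonical map $E_B\to E_{B'}$ of associated normed spaces is nuclear. Fixing $n$, the $T$-image of the unit ball of $l^1(a_n)$ is bounded in $E$ and hence contained in some such $B$; enlarging to $B'$ via the nuclearity of $E$, the bounded set $S(B')\subset\lambda^\infty(A)$ lies (with bounded norm) inside $l^\infty(a_m)$ for some $m\geq n$ by regularity of the co-echelon $(DF)$-space. Chaining the maps as in $(a)$ yields a nuclear factorization of the inclusion $l^1(a_n)\hookrightarrow l^\infty(a_m)$ and hence $\sum_j a_{m,j}/a_{n,j}<\infty$.

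The main delicate point in both parts is ensuring that the nuclear operator extracted from the factorization really coincides with the canonical inclusion; this boils down to a density argument for the finite sequences at the relevant $l^1$-level. In part $(b)$ one further relies on the regularity of the co-echelon $(DF)$-space $\lambda^\infty(A)$ to guarantee that bounded images land within a single inductive step, which is the step where the abstract nuclearity of $E$ must be translated into the concrete Banach-space setting.
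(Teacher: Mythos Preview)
Your approach for part $(a)$ is essentially the direct Petzsche-style argument the paper has in mind, but there is a genuine gap: the claim that nuclearity of the diagonal inclusion $l^{1}(a_{m})\hookrightarrow l^{\infty}(a_{n})$ forces $\sum_{j}a_{n,j}/a_{m,j}<\infty$ is \emph{false}. A diagonal operator $D:l^{1}\to l^{\infty}$ with non-negative entries $d_{j}$ can be nuclear even though $\sum_{j}d_{j}=\infty$. For instance, let $d_{j}=2^{-k}$ for $2^{k}\le j<2^{k+1}$ and write $D=\sum_{k\ge 0}2^{-k}P_{k}$, where $P_{k}$ is the identity on the $k$-th dyadic block. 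For $N=2^{k}$ the Hadamard representation $I_{N}=N^{-1}\sum_{i=1}^{N}h_{i}\otimes h_{i}$ (rows $h_{i}$ of a Hadamard matrix, $\|h_{i}\|_{l^{\infty}_{N}}=\|h_{i}\|_{(l^{1}_{N})^{*}}=1$) gives $\nu(I:l^{1}_{N}\to l^{\infty}_{N})\le 1$, hence $\nu(P_{k})\le 1$ and $\nu(D)\le\sum_{k}2^{-k}=2$, while $\sum_{j}d_{j}=\sum_{k}1=\infty$. Thus the step ``nuclear inclusion $\Rightarrow$ summable diagonal'' fails, and with it both your arguments for $(a)$ and $(b)$. (The remark that the equivalence ``is seen from the standard representation'' only supplies the easy direction.)

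The repair in $(a)$ is to exploit the well-known strengthening of nuclearity: in a nuclear space one may choose the canonical map $\widehat{E}_{q}\to\widehat{E}_{p}$ to have approximation numbers in $l^{1}$ (indeed in every $l^{r}$). Approximation numbers are preserved under composition with bounded maps, so your factorization then yields that the inclusion $l^{1}(a_{m})\hookrightarrow l^{\infty}(a_{n})$ has summable approximation numbers; for a diagonal map $l^{1}\to l^{\infty}$ these coincide with the decreasing rearrangement of the diagonal, and summability of the diagonal follows. This is presumably what the inspection of Petzsche's proof yields.

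For part $(b)$ the paper proceeds differently and more economically: it transposes $T$ and $S$, sandwiches with the natural embeddings $\lambda^{1}(A^{\circ})\hookrightarrow(\lambda^{\infty}(A))'_{b}$ (continuous by regularity of $\lambda^{\infty}(A)$) and $(\lambda^{1}(A))'_{b}\hookrightarrow\lambda^{\infty}(A^{\circ})$, where $A^{\circ}=(1/a_{n})_{n}$, and applies part $(a)$ to the nuclear Fr\'echet space $E'_{b}$. Nuclearity of $\lambda^{1}(A^{\circ})$ then gives nuclearity of $\lambda^{1}(A)$ by the Grothendieck--Pietsch criterion. Your dual-bornological approach can also be made to work, but it needs the same approximation-number fix as in $(a)$, together with the (correct, but nontrivial) characterization of nuclearity of a $(DF)$-space through nuclearity of the maps $E_{B}\to E_{B'}$.
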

\begin{proof}
$(a)$ This follows from an inspection of the second part of the proof of  \cite[Satz 3.5]{Petzsche}; the conditions stated there are not necessary for this part of the proof.

$(b)$ By transposing, we obtain continuous linear mappings $T^t: E'_b \rightarrow (\lambda^1\{A\})'_b$ and  $S^{t}:  (\lambda^\infty\{A\})'_b \rightarrow E'_b$ such that $T^t \circ S^t = \iota^t$.
Consider the matrix $A^\circ = (1/a_n)_{n \in \mathbb{N}}$ and the  natural continuous embeddings $\iota_1: \lambda^1(A^\circ) \rightarrow (\lambda^\infty\{A\})'_b$ (the continuity of $\iota_1$ follows from the fact that $\lambda^\infty\{A\}$ is a regular $(LB)$-space \cite[p.\ 81]{Bierstedt}) and $\iota_2: (\lambda^1\{A\})'_b \rightarrow  \lambda^\infty(A^\circ)$. Then, we have that $(\iota_2 \circ T^t) \circ (S^t \circ \iota_1) = \tau$, where $\tau: \lambda^1(A^\circ) \rightarrow \lambda^\infty(A^\circ)$ denotes the natural embedding. Since $E'_b$ is nuclear, part $(a)$ yields that $\lambda^1(A^\circ)$ is nuclear, which  in turn implies the nuclearity of $\lambda^1\{A\}$  \cite[Proposition 15, p.~75]{Bierstedt}.
\end{proof}

We are now ready to prove Theorem \ref{nuclearity theorem Beurling-Bjorck}.
\begin{proof}[Proof of Theorem \ref{nuclearity theorem Beurling-Bjorck}] We first suppose that $\omega$ and $\eta$ satisfy $[\gamma]$. W.l.o.g.\ we may assume that $\mathcal{S}^{[\omega]}_{[\eta]}(\mathbb{R}^d) \neq \{0\}$. In view of Lemma \ref{non-trivial},  Proposition \ref{STFT Beurling-Bjorck}$(b)$ and the reconstruction formula \eqref{eq:reconstructSTFT} imply that $\mathcal{S}^{[\omega]}_{[\eta]}(\mathbb{R}^d)$ is isomorphic to a (complemented) subspace of  $\mathcal{K}_{[\eta\oplus\omega]}(\mathbb{R}^{2d})$. The latter space is nuclear by Proposition \ref{proposition nuclear}. The result now follows from the fact that nuclearity is inherited to subspaces \cite{Treves}.

 Next, we suppose that $\omega$ and $\eta$ are radially increasing and that $\mathcal{S}^{[\omega]}_{[\eta]}(\mathbb{R}^{d})$ is nuclear and non-trivial. Since the Fourier transform is a topological isomorphism from $\mathcal{S}^{[\omega]}_{[\eta]}(\mathbb{R}^{d})$ onto $\mathcal{S}_{[\omega]}^{[\eta]}(\mathbb{R}^{d})$, it is enough to show that $\eta$ satisfies $[\gamma]$. Set $A_{(\eta)} = (e^{n\eta(j)})_{n \in \mathbb{N}, j \in \mathbb{Z}^d}$ and $A_{\{\eta\}} = (e^{\frac{1}{n}\eta(j)})_{n \in \mathbb{Z}_+, j \in \mathbb{Z}^d}$. By \cite[Proposition 15, p.~75]{Bierstedt}, $\lambda^1[A_{[\eta]}]$ is nuclear if and only if 
$$
\exists \lambda > 0 \, (\forall \lambda > 0) \, : \, \sum_{j \in \mathbb{Z}^d} e^{-\lambda \eta(j)} < \infty.
$$
As $\eta$ is radially increasing and satisfies $(\alpha)$, the above condition is equivalent to $[\gamma]$. Hence, it suffices to show that $\lambda^1[A_{[\eta]}]$ is nuclear. To this end, we employ Proposition \ref{P-trick} with $A = A_{[\eta]}$ and $E = \mathcal{S}^{[\omega]}_{[\eta]}(\mathbb{R}^{d})$ (in the Roumieu case we use the well-known fact that the strong dual of a nuclear $(DF)$-space \cite{Treves} is nuclear). We start by constructing $\varphi_0 \in  \mathcal{S}^{[\omega]}_{[\eta]}(\mathbb{R}^{d})$ such that 
\begin{equation}
\int_{[0,\frac{1}{2}]^d} \varphi_{0}(j+x) dx = \delta_{j,0}, \qquad j \in \mathbb{Z}^d.
\label{delta-sum}
\end{equation}
By Lemma \ref{non-trivial}, there is $\varphi \in \widetilde{\mathcal{S}}^{[\omega]}_{[\eta]}(\mathbb{R}^{d})$ such that $\varphi(0) = 1$. Set 
$$
\chi(x) = \frac{1}{2^d}\int_{[-1, 1]^d} e^{-2\pi ix \cdot t} dt, \qquad x \in \mathbb{R}^d.
$$
Then, $\chi(j/2) = \delta_{j,0}$ for all $j \in \mathbb{Z}^d$. Hence, $\psi = \varphi \chi \in  \widetilde{\mathcal{S}}^{[\omega]}_{[\eta]}(\mathbb{R}^{d})$ and $\psi(j/2) = \delta_{j,0}$ for all $j \in \mathbb{Z}^d$. Then, $\varphi_0 = (-1)^d \partial^d \cdots \partial^1 \psi$ satisfies all requirements. The linear mappings
$$
T: \lambda^1[A_{[\eta]}] \rightarrow \mathcal{S}^{[\omega]}_{[\eta]}(\mathbb{R}^{d}), \quad T( (c_j)_{j \in \mathbb{Z}^d}) = \sum_{j \in \mathbb{Z}^d} c_j \varphi_0(\, \cdot \, - j)
$$
and 
$$
S: \mathcal{S}^{[\omega]}_{[\eta]}(\mathbb{R}^{d}) \rightarrow \lambda^\infty[A_{[\eta]}], \quad S(\varphi) = \left(\int_{[0,\frac{1}{2}]^d}\varphi(x+j) dx \right)_{j \in \mathbb{Z}^d}
$$
are  continuous. Moreover, by \eqref{delta-sum}, we have that  $S \circ T = \iota$.

%As in the proof of the implication $(v)\Rightarrow(i)$ of Theorem \ref{equivalent}, we pick $\varphi_0 \in  \mathcal{S}^{[\omega]}_{[\eta]}(\mathbb{R}^{d})$ such that 
%\begin{equation}
%\varphi_{0}(j) =\delta_{j,0}, \qquad\mbox{for all } j \in \mathbb{Z}^d.
%\label{delta-sum}
%\end{equation}
%The linear mappings
%$$
%T: \lambda^1[A_{[\eta]}] \rightarrow \mathcal{S}^{[\omega]}_{[\eta]}(\mathbb{R}^{d}), \quad T( (c_j)_{j \in \mathbb{Z}^d}) = \sum_{j \in \mathbb{Z}^d} c_j \varphi_0(\, \cdot \, - j)
%$$
%and 
%$$
%%S: \mathcal{S}^{[\omega]}_{[\eta]}(\mathbb{R}^{d}) \rightarrow \lambda^\infty[A_{[\eta]}], \quad S(\varphi) = \left(\varphi(j) \right)_{j \in \mathbb{Z}^d}
%$$
%are clearly continuous. Moreover, by \eqref{delta-sum}, we have that  $S \circ T = \iota$.
\end{proof}

\end{document}